\documentclass[12pt,a4paper]{article}
\usepackage[utf8]{inputenc}
\usepackage[T1]{fontenc}
\usepackage{amsmath,amssymb,amsthm}
\usepackage[a4paper, left=3.5cm, right=3.5cm]{geometry}
\usepackage{hyperref}
\usepackage{mathrsfs}

\newcommand{\R}{\mathbb{R}}
\newcommand{\Z}{\mathbb{Z}}
\newcommand{\T}{\mathbb{T}}
\newcommand{\N}{\mathbb{N}}
\newcommand{\C}{\mathbb{C}}
\newcommand{\Q}{\mathbb{Q}}
\newcommand{\norm}[1]{\left\| #1 \right\|}
\newcommand{\abs}[1]{|#1|}

\newcommand{\Spec}{\operatorname{Spec}}

\newtheorem{theorem}{Theorem}[section]
\newtheorem{proposition}[theorem]{Proposition}
\newtheorem{corollary}[theorem]{Corollary}

\newtheorem{remark}[theorem]{Remark}
\theoremstyle{definition}
\newtheorem{definition}[theorem]{Definition}

\begin{document}

\title{A technical note on the arithmetic cone of smooth periodic vector fields}
\author{W. Oukil\thanks{Corresponding author. Email: \texttt{oukil.walid@gmail.com}} \\
\small\text{Faculty of Mathematics.}\\
\small\text{University of Science and Technology Houari Boumediene.}\\
\small\text{BP 32 EL ALIA 16111 Bab Ezzouar, Algiers, Algeria.}}
\date{\today}
\maketitle

\begin{abstract}
The present note identifies the fundamental mechanism governing the extension of the contraction method. Although every periodic vector field defined by a finite trigonometric polynomial admits a bounded deviation from a linear drift, this property fails in general for smooth periodic vector fields. We show that the contraction argument underlying the original proof extends to any field satisfying a natural uniform summability condition, and that the counterexample violates this condition, thereby revealing the obstruction that prevents the method from extending beyond the finite-spectrum setting.

For a smooth periodic vector field on the $n$-torus, the contraction method for establishing strong rotation vectors extends only to those asymptotic directions $\rho \in \R^n$ for which a certain spectral sum remains uniformly bounded along a sequence of rational approximations. We introduce the \emph{arithmetic cone} $\mathfrak{C}(f)$, defined as the set of all $\rho$ admitting such an approximation. We establish its basic algebraic property: it is a cone. We prove that, under a uniform contraction condition, every element of $\mathfrak{C}(f)$ yields a strong rotation vector for the dynamics. The construction reveals a precise link between the Fourier asymptotics of $f$ and the arithmetic of admissible rotation directions. In the second part, we introduce the class of \emph{spectrally admissible} fields $\mathcal{A}_{\text{\rm spec}}$, for which the cone of the augmented field equals the whole space, and we show that it contains all finite trigonometric polynomials.
\end{abstract}

\section{Introduction}

For vector fields on the torus defined by trigonometric polynomials, the existence of the rotation vector in the strong sense is well established \cite{Oukil2023}. More precisely, each trajectory admits a linear asymptotic drift with uniformly bounded deviation. The proof relies on the finiteness of the Fourier support, which ensures that certain auxiliary constants are finite.

For general smooth periodic vector fields, however, the situation is more subtle. Classical results on almost periodic differential equations \cite{Fink1974,Saito1971_01} identify regimes where rotation-like behavior holds, but they do not exclude arithmetic effects.

 A smooth two-dimensional counterexample was constructed in \cite{BrianeHerve2023}, showing that periodicity and \(C^{\infty}\) regularity do not guarantee bounded deviation.  A counterexample in dimension $3$ \cite{Oukil2025} shows that a weak rotation vector exists, but the deviation from the linear drift is unbounded even when the linearization is nilpotent.

Our main contribution is a new criterion (Proposition 3.1) that identifies the fundamental summability mechanism underlying the extension of Oukil's contraction argument from trigonometric polynomials to general $C^\infty$ fields. We then apply this criterion to the counterexample of \cite{Oukil2025} and show that it fails there. This provides a rigorous explanation of the obstruction and establishes the sharpness of the finite-spectrum condition.

\section{The finite-spectrum theorem and its proof mechanism}

We recall the main result of \cite{Oukil2023}.

\begin{theorem}[Oukil, 2023]
Let $f:\R^n\to\R^n$ be a trigonometric polynomial, i.e. a finite sum of Fourier modes. Then for every initial condition $x_0\in\R^n$, the solution of $\dot x = f(x)$, $x(0)=x_0$, admits a strong rotation vector $\rho(x_0)\in\R^n$ such that
\[
\sup_{t\in\R} \norm{x(t) - \rho(x_0) t} < \infty .
\tag{1}
\]
\end{theorem}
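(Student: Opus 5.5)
The plan is to reproduce the contraction argument of \cite{Oukil2023} in a form that makes explicit where finiteness of the Fourier support enters. Write $f(x)=\sum_{k\in K} a_k e^{i\langle k,x\rangle}$ with $K\subset\Z^n$ finite and $\bar a_k=a_{-k}$. Fix $x_0$ and let $x(t)$ be the solution. Since $f$ is bounded, $x(t)$ grows at most linearly. Hence the averages $x(T)/T$ lie in a compact set, and we may extract a limit point $\rho=\rho(x_0)$ along some sequence $T_j\to\infty$. The goal is to show that the deviation $y(t):=x(t)-\rho t$ is bounded on all of $\R$. This would also show that $\rho$ does not depend on the chosen sequence.

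Split $K$ into the resonant set $K_0(\rho)=\{k\in K:\langle k,\rho\rangle=0\}$ and its complement $K_1(\rho)$. Finiteness gives the key arithmetic constant
\[
\delta(\rho)=\min_{k\in K_1(\rho)}|\langle k,\rho\rangle|>0,\qquad M=\sum_{k\in K}|a_k|\,|k|<\infty .
\]
The deviation satisfies $y(t)=x_0+\int_0^t\big(f(\rho s+y(s))-\rho\big)\,ds$. Consider the following pieces of the integrand.
\begin{itemize}
\item Nonresonant modes $k\in K_1(\rho)$: write $e^{i\langle k,\rho s+y(s)\rangle}=e^{i\langle k,\rho\rangle s}e^{i\langle k,y(s)\rangle}$ and integrate by parts. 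The boundary term is bounded by $|a_k|/|\langle k,\rho\rangle|$. The remaining integral involves $\langle k,\dot y\rangle=\langle k,f-\rho\rangle$, so it is again a finite combination of modes with shifted frequencies $k+k'$, $k'\in K$.
\item Resonant modes $k\in K_0(\rho)$: these contribute $\int_0^t a_k e^{i\langle k,y(s)\rangle}ds$. Together with $-\rho$, they must average to zero, because $\rho$ was defined as the limiting mean velocity.
\end{itemize}

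I would organize these pieces as a fixed-point problem. On the space of continuous functions $y$ with $\sup|y|\le R$, define the operator $\Phi(y)(t)=x_0+\int_0^t(f(\rho s+y(s))-\rho)\,ds$ and apply the integration-by-parts identity. The nonresonant contributions then have Lipschitz constant controlled by $M/\delta(\rho)$, which is finite because $K$ is finite. Iterating the integration by parts only generates frequencies in the finite set $K+K+\cdots$; only finitely many new resonances can appear, so one can re-split and still keep a positive minimal divisor. This is exactly where finite spectrum is used. For a general smooth $f$, the analogous sum $\sum_k |a_k||k|/|\langle k,\rho\rangle|$ may diverge, which is the quantity the later \emph{arithmetic cone} controls. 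The two-sided statement $t\in\R$ follows by running the same argument for the time-reversed field $-f$, which is again a trigonometric polynomial, and checking that it produces $-\rho$.

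The main obstacle is the resonant part. On the resonant sublattice spanned by $K_0(\rho)$, the dynamics of $\langle k,y\rangle$ is genuinely nonlinear and is not damped by any small divisor. One must show that the projection of $y$ onto $\mathrm{span}\,K_0(\rho)$ stays bounded. My first approach would be to induct on the dimension of the resonant lattice. Projecting onto $\mathrm{span}\,K_0(\rho)$ gives a lower-dimensional trigonometric-polynomial system: the nonresonant modes have already been shown to contribute only bounded perturbations, so they can be absorbed. For this reduced system, the limiting mean velocity is zero. I would then apply the induction hypothesis to it, with bounded perturbations, and check that the resulting uniform bound is compatible with the contraction constants obtained above.
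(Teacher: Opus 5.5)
There is a genuine gap, and it lies in the two steps you defer or treat as a check: the resonant part and the passage to $t\in\R$. Neither can be completed, because the statement as written fails for simple trigonometric polynomials.

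\emph{Two-sided claim.} Take $f(x)=(\cos 2\pi x_2,\,-\sin 2\pi x_2)$ on $\T^2$ and $x(0)=(0,\tfrac14)$. The solution is $x(t)=\bigl(\tfrac{1}{2\pi}\log\cosh 2\pi t,\ \tfrac{1}{\pi}\arctan e^{-2\pi t}\bigr)$. So $x_1(t)-|t|$ is bounded, the forward and backward drifts are $(1,0)$ and $(-1,0)$, and the time-reversed field does not produce $-\rho$.

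\emph{One-sided claim.} Take $f(x)=\bigl(\sin^2(\pi x_1)+\sin^2(\pi x_2),\,-\sin^3(2\pi x_2)\bigr)$ and $x(0)=(0,\tfrac14)$. Since $-\dot x_2\asymp x_2^3$, we get $x_2(t)\asymp t^{-1/2}$. Then $\dot x_1\ge\sin^2(\pi x_2(t))\gtrsim 1/t$ forces $x_1(t)\to\infty$. But once $\sin^2(\pi x_2)\le\epsilon$, advancing $x_1$ by one unit takes time at least $\int_0^1\frac{du}{\sin^2(\pi u)+\epsilon}=(\epsilon(1+\epsilon))^{-1/2}$, so $x_1(t)/t\to0$ (in fact $x_1(t)\asymp\sqrt t$). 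Thus $x(t)/t\to0$ while $x$ is unbounded, and no $\rho$ works.

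In this second example your limit point is $\rho=0$, every mode is resonant, and $\mathrm{span}\,K_0(\rho)=\R^2$. Your induction on the resonant lattice therefore reduces nothing: the ``reduced system with zero mean velocity'' is the original system, and its deviation is unbounded. Your assertion that the resonant modes together with $-\rho$ average to zero is also unsupported, since $\rho$ is only a subsequential limit of $x(T)/T$.

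The paper gives no proof to compare with, only a description of Oukil's contraction scheme via rational approximations $r_k\to\rho$. That description takes for granted that the limiting denominators $\langle\rho,p\rangle$ do not vanish (``provided $\rho\neq0$''), so it also sets aside the resonant case you isolate.

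The nonresonant step does not work as written either. After $m$ integrations by parts the frequencies lie in the $m$-fold sumset $K+\dots+K$. These sets grow with $m$ and exhaust the infinite semigroup generated by $K$. So it is false that only finitely many new resonances appear and a positive minimal divisor survives. For example, with $\rho=(1,\alpha)$, $\alpha\notin\Q$, and $\pm e_1,\pm e_2\in K$, the nonzero values $|\langle k,\rho\rangle|$ on these sumsets accumulate at $0$. This is the small-divisor problem itself, and finiteness of $K$ does not remove it.

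There is also no small parameter. Each remainder is again an integral over $[0,t]$ of the same type, with coefficients such as $|a_k||a_{k'}||k|/|\langle k,\rho\rangle|$ that are not small in general. So $M/\delta(\rho)$ controls only the first boundary term and does not make your $\Phi$ a contraction. On an unbounded time interval $\Phi$ has no finite Lipschitz constant in the sup norm. Showing that it maps the $R$-ball into itself already requires the uniform bound you are trying to prove.

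Any correct version of the theorem needs extra hypotheses that exclude the phenomena exhibited above.
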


The proof uses a contraction mapping argument in a Banach space of periodic functions. A key role is played by the constant
\[
\tau(f,r) := \max_{p\in \Lambda_f,\, \langle r,p\rangle\neq 0} \frac{1}{|\langle r,p\rangle|},
\]
where $\Lambda_f$ is the finite set of Fourier modes of $f$. Finiteness of $\Lambda_f$ makes $\tau(f,r)$ finite for every rational vector $r$. The contraction step requires that for a sequence of rational approximations $r_k\to\rho$, the quantities
\[
\sum_{p\in \Lambda_f} \frac{|\hat f(p)|}{|\langle r_k, p\rangle|}
\tag{2}
\]
remain uniformly bounded in $k$. For finite $\Lambda_f$, this is automatic because the sum is finite and the denominators do not vanish at the limit (provided $\rho\neq0$).

\section{An extension criterion and its violation}

The following proposition isolates the summability mechanism governing the extension of the contraction argument.

\begin{proposition}(formal extension criterion)
Let $f\in C^\infty(\T^n)$ with Fourier coefficients $\hat f(p)$. Suppose there exists a sequence of rational vectors $(r_k)$ converging to some nonzero $\rho$ such that
\[
\sup_{k\in\N} \sum_{p\in \mathbb Z^n\setminus\{0\},\, \langle r_k, p\rangle\neq 0} \frac{|\hat f(p)|}{|\langle r_k, p\rangle|} < \infty .
\tag{3}
\]
Then the fixed-point argument of \cite{Oukil2023} extends verbatim, and the solution starting from any initial condition admits a strong rotation vector $\rho$.
\end{proposition}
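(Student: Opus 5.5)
Following Oukil's finite-spectrum argument, I would look for the solution in the form $x(t)=\rho t+x_0+u(t)$ with $u$ bounded, and obtain $u$ as a limit of corrections $u_k$ built around the rational drifts $r_k$. For each $k$, set $x_k(t)=x_0+r_kt+u_k(t)$. The equation $\dot x=f(x)$ becomes
\[
\dot u_k(t)=f\bigl(x_0+r_kt+u_k(t)\bigr)-r_k .
\]
Because $r_k$ is rational, $t\mapsto f(x_0+r_kt+y)$ is periodic in $t$ for fixed $y$. So I would work in the Banach space $E_k$ of continuous periodic functions with the sup norm (the period is a common period of the relevant modes). On $E_k$ I would define the operator $T_k$ by expanding $f$ in Fourier series, $f(x)=\sum_p\hat f(p)e^{2\pi i\langle p,x\rangle}$, and integrating mode by mode. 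Resonant modes, those with $\langle r_k,p\rangle=0$, are absorbed into an averaged drift correction. Nonresonant modes contribute terms whose primitives are bounded by $|\hat f(p)|/(2\pi|\langle r_k,p\rangle|)$. Summing over $p$, condition (3) gives a bound $\|T_ku\|_\infty\le C$ with $C$ independent of $k$. This replaces the finiteness of $\tau(f,r)$ used in \cite{Oukil2023}.

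Next I would prove that $T_k$ is a contraction on a ball of radius $R$ in $E_k$. For the Lipschitz estimate I would use $|e^{2\pi i\langle p,a\rangle}-e^{2\pi i\langle p,b\rangle}|\le 2\pi|p|\,|a-b|$, which produces the weighted sum $\sum_p |p|\,|\hat f(p)|/|\langle r_k,p\rangle|$. I expect this to be the \emph{main obstacle}: (3) controls only the unweighted sum, so the extra factor $|p|$ must be recovered somehow. My first attempt would be to integrate by parts once more in time, in the form $u\mapsto f(x_0+r_kt+u)$, so that the derivative falls on $u$ and is controlled by $\|\dot u\|\le \|f\|_\infty+|r_k|$. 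The second option is to invoke the \emph{uniform contraction condition} mentioned in the abstract as part of what ``verbatim'' means, together with the smoothness of $f$, which makes $|p||\hat f(p)|$ summable at the same rate. In either case the outcome should be a contraction constant $<1$ uniform in $k$, after shrinking the time scale if necessary. Banach's theorem then gives a fixed point $u_k$ with $\|u_k\|_\infty\le C$, and the correction of the drift from the resonant part is $o(1)$ as $k\to\infty$.

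Finally, I would pass to the limit $k\to\infty$. The functions $x_k$ are solutions up to an error that vanishes with the resonant correction, and they are equicontinuous because $|\dot x_k|\le\|f\|_\infty$. Arzel\`a--Ascoli on compact time intervals, combined with uniqueness for the Cauchy problem of the smooth field $f$, shows that $x_k\to x$, the true solution from $x_0$. Since $\sup_t|x_k(t)-x_0-r_kt|\le C$ uniformly in $k$, for each fixed $t$ the limit gives $|x(t)-x_0-\rho t|\le C$. Hence $\sup_{t\in\R}\|x(t)-\rho t\|<\infty$, which is the strong rotation property (1) with $\rho(x_0)=\rho$.
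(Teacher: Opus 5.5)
The step that fails is the treatment of the drift. It cannot be repaired, because the statement as written is false.

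You prescribe the drift through $r_k\to\rho$ and then assert that the averaged resonant correction is $o(1)$. But $\dot u_k=f(x_0+r_kt+u_k)-r_k$ has a bounded periodic solution only if the period average of $f(x_0+r_kt+u_k(t))$ equals $r_k$. If you absorb the mismatch $d_k$ into the equation, then $x_k$ solves $\dot x_k=f(x_k)-d_k$. The claim $d_k\to0$ is precisely the claim that $\rho$ is the rotation vector of the orbit, and nothing in (3) forces it.

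Take $f\equiv c$. Every sum in (3) vanishes, so (3) holds for any rational sequence $r_k\to\rho$ with any $\rho\neq0$. Yet $x(t)=x_0+ct$ has $\sup_t\norm{x(t)-\rho t}=\infty$ for $\rho\neq c$. Your construction gives $u_k\equiv0$, $x_k=x_0+r_kt$ and $d_k=c-r_k\to c-\rho\neq0$, so $x_k$ converges to $x_0+\rho t$, not to the solution.

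The defect is structural. Replacing $r_k$ by $\lambda r_k$ with $\lambda\in\Q_{>0}$ keeps the vectors rational and divides each sum in (3) by $\lambda$; this is exactly the paper's cone property. So the hypothesis holds for every $\lambda\rho$ as soon as it holds for $\rho$. The conclusion would then make all $\lambda\rho$ strong rotation vectors of one orbit. That is impossible, since two strong rotation vectors $\rho,\rho'$ of the same orbit satisfy $\abs{t}\,\norm{\rho-\rho'}\le C$ for all $t$. Hence your Arzel\`a--Ascoli/uniqueness step cannot identify the limit of the $x_k$ with the true solution.

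Two further steps would fail even with the right drift. First, the bound $\abs{\hat f(p)}/(2\pi\abs{\langle r_k,p\rangle})$ for a nonresonant primitive holds for the pure exponential $e^{2\pi i\langle p,x_0+r_kt\rangle}$. Once $u(t)$ sits in the phase, integrating by parts leaves a remainder bounded only by $\abs{p}\,\norm{\dot u}_\infty\abs{t}/\abs{\langle r_k,p\rangle}$. This grows with $t$, hence with the period of $E_k$.

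Second, the Lipschitz obstacle you flagged is genuine, and neither fix removes it. In your integration by parts the boundary term alone is of size $2\pi\abs{p}\,\norm{u-v}_\infty/\abs{\langle r_k,p\rangle}$, so the weighted sum returns. Smoothness makes $\sum_p\abs{p}\,\abs{\hat f(p)}$ finite, but it gives no bound uniform in $k$ against divisors that may be as small as $1/q_k$. Appealing to a uniform contraction condition adds a hypothesis this proposition does not contain; the paper imposes it separately, in the conditional strong rotation theorem.

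The paper's own proof is only a sketch. It asserts a contraction constant $C\sum_p\abs{\hat f(p)}/\abs{\langle r,p\rangle}$ without the factor $\abs{p}$ and defers the rest to the finite-spectrum argument, so it does not supply these ingredients either. A repaired statement would need at least two things. The drift must be an output of the construction, selected by the dynamics, rather than an input fixed by $(r_k)$. And the summability hypothesis must be strong enough to control the Lipschitz constant, for instance (3) with the extra weight $\abs{p}$, i.e. for $Df$.
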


\begin{proof}
We sketch the key estimate. In the proof of \cite{Oukil2023}, the contraction inequality for the operator $\Psi[r,\omega,\cdot]$ takes the form
\[
\norm{\Psi[r,\omega,h] - \Psi[r,\omega,g]}_{\omega,0}
\le C \left( \sum_{p\in \mathbb Z^n\setminus\{0\}} \frac{|\hat f(p)|}{|\langle r, p\rangle|} \right) \norm{h-g}_{\omega,0}.
\]
When $f$ is a trigonometric polynomial, the sum is finite and the constant is bounded independently of $r$ as $r\to\rho$. In the general case, condition (3) provides exactly the uniform bound needed to make the operator a contraction for all sufficiently large $k$. The remainder of the proof follows the original argument; the details are identical and are therefore omitted.
\end{proof}

Now consider the counterexample constructed in \cite{Oukil2025}. For every $n\ge2$, there exists a $C^\infty$ vector field $f$ on $\T^n$ with infinite Fourier spectrum and strictly nilpotent Jacobian, such that for some initial condition,
\[
\forall \rho\in\R^n,\qquad \sup_{t\ge0} \norm{x(t) - \rho t} = \infty .
\tag{4}
\]
The construction chooses an incommensurable vector $\xi$ and a sequence of frequency vectors $k_m\in\Z^n$ such that $|\langle \xi, k_m\rangle|$ decays faster than any power of $|k_m|$ (Liouville type). The Fourier coefficients $\hat f(k_m)$ are chosen so that
\[
\sum_{m} \frac{|\hat f(k_m)|}{|\langle r_k, k_m\rangle|}
\]
diverges for every sequence of rational approximations $r_k\to\xi$. Hence condition (3) fails.

\begin{corollary}
The counterexample of \cite{Oukil2025} violates the summability criterion of Proposition 3.1. Consequently, Oukil's contraction method cannot be applied, and the absence of a strong rotation vector is a direct consequence of the divergence of the Fourier series.
\end{corollary}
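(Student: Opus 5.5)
The argument has two parts: first show that the spectral sum (3) diverges for the field of \cite{Oukil2025}, and then explain the conclusion about the rotation vector. The divergence in the first part only blocks the method of Proposition 3.1; the absence of a strong rotation vector has to be taken from (4).

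\textbf{Divergence of (3).} Let $(r_k)$ be any sequence of rational vectors converging to a nonzero $\rho$. Condition (3) must fail for every such sequence, not only for one, because Proposition 3.1 only needs some sequence to work. The sum in (3) has nonnegative terms. Restricting it to the lacunary frequencies $k_m$ gives the lower bound
\[
\sum_{p\in\Z^n\setminus\{0\},\,\langle r_k,p\rangle\neq0}\frac{|\hat f(p)|}{|\langle r_k,p\rangle|}\;\ge\;\sum_{m:\,\langle r_k,k_m\rangle\neq0}\frac{|\hat f(k_m)|}{|\langle r_k,k_m\rangle|}.
\]
I then split into two cases according to $\rho$.

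\emph{Case $\rho=\xi$, or more generally $\rho$ a nonzero multiple of $\xi$.} The cone property allows rescaling to $\xi$. Fix $M$. Since $r_k\to\xi$, for each $m\le M$ with $\langle\xi,k_m\rangle\neq0$ we have $|\langle r_k,k_m\rangle|\le 2|\langle\xi,k_m\rangle|$ once $k$ is large. Hence
\[
\liminf_{k\to\infty}\sum_{m}\frac{|\hat f(k_m)|}{|\langle r_k,k_m\rangle|}\;\ge\;\frac12\sum_{m\le M}\frac{|\hat f(k_m)|}{|\langle \xi,k_m\rangle|}.
\]
This is the main obstacle. The construction must guarantee that the series $\sum_m |\hat f(k_m)|/|\langle\xi,k_m\rangle|$ diverges. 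Smoothness forces $|\hat f(k_m)|$ to decay faster than any power of $|k_m|$. The Liouville property of $\xi$ lets $|\langle\xi,k_m\rangle|$ decay faster still. So I would impose, and check in the construction of \cite{Oukil2025}, the inequality $|\hat f(k_m)|\ge|\langle\xi,k_m\rangle|$ for all $m$. Then every term is at least $1$, and letting $M\to\infty$ shows the supremum over $k$ is infinite.

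\emph{Case $\rho$ not parallel to $\xi$.} Here the Liouville smallness is absent, and I would argue as follows. A strong rotation vector in direction $\rho$ would contradict (4) directly. So if (3) held and Proposition 3.1 applied, it would produce a bounded-deviation drift, which is impossible. By contraposition, (3) fails for every admissible sequence.

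\textbf{Consequence.} The contraction method therefore cannot be run for this field. To finish, I would state precisely what "direct consequence" means. The divergence removes the only available mechanism. The actual absence of a strong rotation vector is given by (4).
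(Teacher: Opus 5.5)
The gap is your second case. You contrapose Proposition 3.1 against (4) to conclude that (3) fails along every rational sequence converging to any $\rho$ not parallel to $\xi$. That conclusion is false for every $C^\infty$ field, so this step cannot be repaired.

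To see this, take $\rho$ Diophantine, $|\langle\rho,p\rangle|\ge\gamma|p|^{-\tau}$ for all $p\in\Z^n\setminus\{0\}$; these $\rho$ form a full-measure set when $\tau>n-1$. Take $r_k=a_k/k$ with $a_k\in\Z^n$ a nearest lattice point to $k\rho$, so $\|r_k-\rho\|\le\sqrt n/(2k)$. If $|p|^{1+\tau}\le\gamma k/\sqrt n$, then $|\langle r_k,p\rangle|\ge\frac{\gamma}{2}|p|^{-\tau}$. Otherwise $|\langle r_k,p\rangle|\ge 1/k$ whenever it is nonzero. Hence the sum in (3) is at most $\frac{2}{\gamma}\sum_{p\neq0}|p|^{\tau}|\hat f(p)|+k\sum_{|p|>(\gamma k/\sqrt n)^{1/(1+\tau)}}|\hat f(p)|$, which is bounded in $k$ by the rapid decay of $\hat f$. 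So the field of \cite{Oukil2025} satisfies (3) at almost every $\rho$. Under the reading you adopt (failure for every sequence and every $\rho$), the first sentence of the corollary is therefore false.

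The error comes from using Proposition 3.1 as a black box. Read literally, it cannot hold: a strong rotation vector is unique, while (3) holds on a full-measure set of directions. For $f\equiv v$ the sum in (3) vanishes identically, so the proposition would make every nonzero $\rho$ the rotation vector of $x_0+vt$. There were warning signs in your own write-up. If the contrapositive were sound, it would also cover $\rho=\xi$ and make your first case superfluous. It would also make the last sentence of the corollary circular, since the divergence would then be deduced from (4) rather than the other way round.

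What can be proved, and what the paper's argument proves, is failure of (3) only in the direction $\xi$, i.e.\ $\xi\notin\mathfrak{C}(f)$. The paper reads ``violates the criterion'' in this weak sense. It simply asserts that the coefficients in \cite{Oukil2025} are chosen so that the sum diverges along every rational sequence $r_k\to\xi$.

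Your first case covers exactly this and improves on it. Your $\liminf$ argument, in which each fixed term tends to $|\hat f(k_m)|/|\langle\xi,k_m\rangle|$, reduces the paper's assertion to divergence of the single series $\sum_m|\hat f(k_m)|/|\langle\xi,k_m\rangle|$. That is easy to arrange for a Liouville $\xi$ while keeping $f$ smooth, though, like the paper, you must still take it from the construction. The argument applies verbatim to every $\lambda\xi$ with $\lambda\neq0$, so the cone property is not needed.

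Your treatment of the second sentence is correct. Failure of a sufficient condition cannot yield the absence of a strong rotation vector, and both you and the paper ultimately take that absence from (4).
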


\section{The mechanism behind the failure}

The divergence of the series in (3) is not a mere technical artifact. It reflects an accumulation of small-divisor resonances along the sequence $k_m$. Each resonance contributes a term of order $1/|\langle r_k, k_m\rangle|$, which becomes arbitrarily large when $r_k$ approximates $\xi$ from the rational side. In the nilpotent example, these contributions add up to produce an unbounded deviation from any linear drift, as shown in \cite{Oukil2025}.
 
Let $f: \R^n \to \R^n$ be a smooth vector field, periodic in each coordinate with period $1$, i.e. $f(x + e_j) = f(x)$ for all $j = 1, \dots, n$, where $(e_1, \dots, e_n)$ is the canonical basis of $\R^n$. The field descends to a map on the $n$-torus $\T^n = \R^n / \Z^n$, and we write $f \in C^\infty(\T^n, \R^n)$. For an initial condition $x_0 \in \R^n$, consider the solution $x(t)$ of
\begin{equation}\label{eq:ODE}
\dot x = f(x), \qquad x(0) = x_0.
\end{equation}

When $f$ is a finite trigonometric polynomial, the existence of a strong rotation vector is known \cite{Oukil2023}: there exists $\rho(x_0) \in \R^n$ such that $\sup_{t \in \R} \|x(t) - \rho(x_0) t\| < \infty$. The proof rests on a contraction argument whose contraction constant involves the finite sum
\[
\sum_{p \in \Lambda_f} \frac{\|\hat f(p)\|}{\abs{\langle r, p \rangle}},
\]
where $\Lambda_f \subset \Z^n$ is the finite set of active Fourier modes. For general $f \in C^\infty$, this sum becomes an infinite series, and its uniform boundedness along rational approximations of the candidate rotation vector is no longer automatic. In \cite{Oukil2025}, a counterexample shows that smoothness alone does not guarantee a strong rotation vector; the obstruction arises precisely from the divergence of such a series.

The purpose of this note is to isolate the arithmetic-spectral mechanism underlying the contraction method and to organise the set of directions for which it succeeds into a structured object called the \emph{arithmetic cone} $\mathfrak{C}(f)$. We establish its elementary properties and prove that it constitutes a natural subset of $\R^n$ on which the contraction proof is conditionally valid. In the second part, we introduce the class of vector fields for which the cone of the augmented field equals the whole space.

This note does not aim to provide a complete theory, but rather to formalise a new notion, explain its role, and prepare the ground for further developments.

\section{Fourier expansion and spectral notation}

\subsection{Fourier series on the torus}

Every $f \in C^\infty(\T^n, \R^n)$ admits a uniformly convergent Fourier expansion
\begin{equation}\label{eq:Fourier}
f(x) = \sum_{p \in \Z^n} \hat f(p) \, e^{2\pi i \langle p, x \rangle},
\end{equation}
where $\langle \cdot, \cdot \rangle$ denotes the Euclidean scalar product on $\R^n$, and the Fourier coefficients are given by
\[
\hat f(p) = \int_{[0,1]^n} f(x) \, e^{-2\pi i \langle p, x \rangle} \, dx \;\in\; \C^n.
\]

Smoothness of $f$ implies rapid decay of the coefficients: for every integer $N \in \N$, there exists a constant $C_N > 0$ such that
\begin{equation}\label{eq:decay}
\norm{\hat f(p)} \le \frac{C_N}{(1 + \abs{p})^N}, \qquad \forall p \in \Z^n,
\end{equation}
where $\abs{p} = \sqrt{p_1^2 + \cdots + p_n^2}$ and $\norm{\cdot}$ denotes the Euclidean norm on $\C^n$. Conversely, any sequence $(\hat f(p))_{p \in \Z^n}$ satisfying \eqref{eq:decay} defines a smooth function via \eqref{eq:Fourier}.

The set of active frequencies, or \emph{spectrum}, of $f$ is
\[
\Spec(f) = \{ p \in \Z^n \mid \hat f(p) \neq 0 \}.
\]
If $\Spec(f)$ is finite, $f$ is a trigonometric polynomial. The present work concerns fields for which $\Spec(f)$ may be infinite.

\subsection{The rational direction set}

A vector $r \in \Q^n$ is called \emph{admissible} for $f$ if $\langle r, p \rangle \neq 0$ for all $p \in \Spec(f) \setminus \{0\}$. We denote
\begin{equation}\label{eq:Qf}
\Q^n_f = \left\{ r \in \Q^n \mid \forall p \in \Spec(f) \setminus \{0\},\; \langle r, p \rangle \neq 0 \right\}.
\end{equation}
The constant mode $\hat f(0)$ plays no role in the contraction estimate (it is absorbed by the average drift), so we ignore $p = 0$ in the summations below.

The set $\Q^n_f$ is dense in $\R^n$. Indeed, for each $p \in \Spec(f) \setminus \{0\}$, the hyperplane $\{ r \in \R^n \mid \langle r, p \rangle = 0 \}$ has empty interior; their union over the countable set $\Spec(f)$ is meagre, so its complement intersected with the dense set $\Q^n$ remains dense.

\subsection{The spectral gauge}

For any $r \in \Q^n_f$, write $r = a/q$ with $a \in \Z^n$ and $q \in \N^*$. For every $p \in \Spec(f) \setminus \{0\}$, we have $\langle r, p \rangle = \langle a, p \rangle / q$. Since $\langle r, p \rangle \neq 0$ and $\langle a, p \rangle \in \Z$, we obtain the uniform lower bound
\begin{equation}\label{eq:lowerbound}
\abs{\langle r, p \rangle} \ge \frac{1}{q}.
\end{equation}
This bound, combined with the rapid decay \eqref{eq:decay}, guarantees the convergence of the following series.

\begin{definition}[Spectral gauge]
For any $r \in \Q^n_f$, the \emph{spectral gauge} of $f$ at $r$ is
\begin{equation}\label{eq:Phi}
\Phi_f(r) = \sum_{p \in \Spec(f) \setminus \{0\}} \frac{\norm{\hat f(p)}}{\abs{\langle r, p \rangle}}.
\end{equation}
\end{definition}

In the finite-spectrum case, $\Phi_f(r)$ is a finite sum; for infinite spectrum, it is a convergent series.

\section{The arithmetic cone}

\subsection{Definition and first observations}

\begin{definition}[Arithmetic cone]
The \emph{arithmetic cone} of $f$ is the set
\begin{equation}\label{eq:Cf} 
\mathfrak{C}(f) = \left\{ \rho \in \R^n \setminus \{0\} \;\Bigg|\; \exists\, (r_k)_{k \in \N} \subset \Q^n_f,\; r_k \xrightarrow[k\to\infty]{} \rho,\; \sup_{k \in \N} \Phi_f(r_k) < \infty \right\}. 
\end{equation}
By convention, we include the zero vector $0 \in \mathfrak{C}(f)$.
\end{definition}

The terminology reflects two aspects: the set is a cone (Proposition~\ref{prop:cone}), and its definition involves the arithmetic of rational approximations through the spectral gauge $\Phi_f$.

\begin{remark}
If $\Spec(f)$ is finite, then $\Phi_f$ is bounded on any compact set of $\Q^n_f$ not intersecting the finitely many hyperplanes $\langle r, p \rangle = 0$. Every $\rho \in \R^n$ can be approximated by a sequence $(r_k) \subset \Q^n_f$ staying within such a compact set, so $\mathfrak{C}(f) = \R^n$. The result of \cite{Oukil2023} is recovered.
\end{remark}

\subsection{Cone property}

\begin{proposition}[Cone property]\label{prop:cone}
For every $\rho \in \mathfrak{C}(f)$ and every real $\lambda > 0$, one has $\lambda \rho \in \mathfrak{C}(f)$. Thus $\mathfrak{C}(f)$ is a cone.
\end{proposition}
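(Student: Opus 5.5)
The plan is to exploit the exact homogeneity of the spectral gauge under positive rational dilations. Fix $\rho \in \mathfrak{C}(f)$ and $\lambda > 0$. If $\rho = 0$, then $\lambda\rho = 0 \in \mathfrak{C}(f)$ by the convention in the definition, so I assume $\rho \neq 0$. In that case $\lambda\rho \neq 0$ as well. By definition there is a sequence $(r_k) \subset \Q^n_f$ with $r_k \to \rho$ and $M := \sup_k \Phi_f(r_k) < \infty$. I want to produce a sequence in $\Q^n_f$ that converges to $\lambda\rho$ and along which $\Phi_f$ stays bounded.

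\textbf{Step 1 (rational dilations preserve admissibility and scale the gauge).} Let $\mu \in \Q$ with $\mu > 0$ and let $r \in \Q^n_f$. Then $\mu r \in \Q^n$. For every $p \in \Spec(f)\setminus\{0\}$ we have $\langle \mu r, p\rangle = \mu \langle r,p\rangle \neq 0$, so $\mu r \in \Q^n_f$. Directly from the definition of $\Phi_f$,
\[
\Phi_f(\mu r) = \sum_{p \in \Spec(f)\setminus\{0\}} \frac{\norm{\hat f(p)}}{\mu\,\abs{\langle r,p\rangle}} = \frac{1}{\mu}\,\Phi_f(r),
\]
where the series converges because $\Phi_f(r)$ does.

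\textbf{Step 2 (handling irrational $\lambda$).} This is the only real point, since $\lambda r_k$ need not be rational. I choose rationals $\lambda_k \to \lambda$ with $\lambda_k \in [\lambda/2, 2\lambda]$ for all $k$; this is possible by density of $\Q$, discarding finitely many terms if needed. Set $s_k := \lambda_k r_k$. By Step 1, $s_k \in \Q^n_f$ and
\[
\Phi_f(s_k) = \frac{\Phi_f(r_k)}{\lambda_k} \le \frac{2M}{\lambda}.
\]
Moreover,
\[
\norm{s_k - \lambda\rho} \le \abs{\lambda_k - \lambda}\,\norm{r_k} + \lambda\,\norm{r_k - \rho} \to 0,
\]
using that $(r_k)$ is bounded because it converges.

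\textbf{Conclusion.} The sequence $(s_k)$ lies in $\Q^n_f$, converges to the nonzero vector $\lambda\rho$, and satisfies $\sup_k \Phi_f(s_k) < \infty$. Hence $\lambda\rho \in \mathfrak{C}(f)$.

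I expect no step to be a serious obstacle. The only subtlety is Step 2: one must dilate by rational approximants of $\lambda$ rather than by $\lambda$ itself, so that the dilated vectors stay in $\Q^n$. The uniform lower bound $\lambda_k \ge \lambda/2$ is what keeps the gauge bounded along the new sequence.
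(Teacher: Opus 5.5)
Your proof is correct and follows the paper's argument: positive rational dilations preserve $\Q^n_f$ and scale the gauge exactly as $\Phi_f(\mu r)=\Phi_f(r)/\mu$, and you dilate by rational approximants of $\lambda$ to stay in $\Q^n$. The paper splits into rational and irrational $\lambda$ and extracts a diagonal sequence $\lambda_m r_{k_m}$. You instead pair $\lambda_k$ with $r_k$ directly, with $\lambda_k\ge\lambda/2$ keeping the gauge bounded, which covers both cases at once and removes the diagonal step.
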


\begin{proof}
Let $\rho \in \mathfrak{C}(f) \setminus \{0\}$ and let $(r_k) \subset \Q^n_f$ be a sequence satisfying $r_k \to \rho$ and $M = \sup_k \Phi_f(r_k) < \infty$. Fix $\lambda > 0$.

\textit{Case 1: $\lambda \in \Q_+$.} Then $(\lambda r_k) \subset \Q^n_f$ and $\lambda r_k \to \lambda \rho$. Moreover,
\[
\Phi_f(\lambda r_k) = \sum_{p \neq 0} \frac{\norm{\hat f(p)}}{\abs{\langle \lambda r_k, p \rangle}} = \frac{1}{\lambda} \Phi_f(r_k) \le \frac{M}{\lambda},
\]
so $\sup_k \Phi_f(\lambda r_k) \le M/\lambda < \infty$, hence $\lambda \rho \in \mathfrak{C}(f)$.

\textit{Case 2: $\lambda \notin \Q$.} Let $(\lambda_m) \subset \Q_+$ be a sequence converging to $\lambda$. For each fixed $m$, the sequence $k \mapsto \lambda_m r_k$ tends to $\lambda_m \rho$ and satisfies $\Phi_f(\lambda_m r_k) = \Phi_f(r_k)/\lambda_m \le M/\lambda_m$. Since $\lambda_m \to \lambda > 0$, the sequence $(M/\lambda_m)$ is bounded. We construct a diagonal sequence as follows: for each $m$, choose $k_m$ such that
\[
\| \lambda_m r_{k_m} - \lambda \rho \| \le \frac{1}{m} \quad \text{and} \quad k_m \to \infty.
\]
This is possible because $\lambda_m r_k \to \lambda_m \rho$ as $k \to \infty$ and $\lambda_m \rho \to \lambda \rho$. The diagonal sequence $r'_m = \lambda_m r_{k_m}$ satisfies $r'_m \to \lambda \rho$ and
\[
\Phi_f(r'_m) = \frac{1}{\lambda_m} \Phi_f(r_{k_m}) \le \frac{M}{\lambda_m} \le \sup_{j} \frac{M}{\lambda_j} < \infty.
\]
Hence $\lambda \rho \in \mathfrak{C}(f)$.
\end{proof}

\subsection{Relation to the strong rotation property}

The following result is conditional on the uniform contraction property, which holds for finite trigonometric polynomials and, more generally, whenever the gauge remains uniformly bounded.

\begin{theorem}[Conditional strong rotation]\label{thm:conditional}
Let $f \in C^\infty(\T^n, \R^n)$ and $\rho \in \mathfrak{C}(f)$. Let $(r_k) \subset \Q^n_f$ be a sequence such that $r_k \to \rho$ and $\sup_k \Phi_f(r_k) \le M < \infty$. Assume that for the family of operators $\Psi[r_k, \omega, \cdot]$ defined in \cite{Oukil2023}, there exists $\omega > 0$ such that
\[
\sup_{k} \| \Psi[r_k, \omega, h] - \Psi[r_k, \omega, g] \|_{\omega,0} \le \frac{1}{2} \|h - g\|_{\omega,0}
\]
for all $h,g$ in the appropriate Banach space. Then for every initial condition $x_0 \in \R^n$, the solution $x(t)$ of \eqref{eq:ODE} admits $\rho$ as a strong rotation vector:
\[
\sup_{t \in \R} \norm{x(t) - \rho t} < \infty.
\]
\end{theorem}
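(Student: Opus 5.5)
The plan is to reproduce the fixed-point argument of \cite{Oukil2023} at each rational $r_k$, using the assumed uniform contraction to get estimates independent of $k$, and then to pass to the limit $k\to\infty$.

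\textbf{Step 1: a fixed point for each $k$.} By the hypothesis, $\Psi[r_k,\omega,\cdot]$ is a $\tfrac12$-contraction on the Banach space of \cite{Oukil2023}, with norm $\|\cdot\|_{\omega,0}$. By the Banach fixed-point theorem it has a unique fixed point $h_k$. In the construction of \cite{Oukil2023}, such a fixed point encodes the deviation. That is, it yields a function $t\mapsto h_k(t)$ for which $x_k(t)=x_0+r_k t+h_k(t)$ solves an approximate (frequency-$r_k$) version of \eqref{eq:ODE}, and the defect is controlled by $\Phi_f(r_k)\le M$.

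\textbf{Step 2: uniform bounds.} I would bound $h_k$ uniformly in $k$. First, $\|h_k-\Psi[r_k,\omega,0]\|_{\omega,0}\le \tfrac12\|h_k\|_{\omega,0}$, so $\|h_k\|_{\omega,0}\le 2\|\Psi[r_k,\omega,0]\|_{\omega,0}$. Second, the term $\Psi[r_k,\omega,0]$ is a sum of Fourier modes with divisors $\langle r_k,p\rangle$. It is therefore bounded by $C\,\Phi_f(r_k)\le CM$, exactly as in the estimate in the proof of Proposition 3.1. Combining the two gives $\sup_k\|h_k\|_{\omega,0}\le 2CM$, independently of $k$. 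This is the only place where the hypothesis $\rho\in\mathfrak C(f)$ is used.

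\textbf{Step 3: passage to the limit.} The derivatives $\dot h_k=f(x_k)-r_k$ are bounded by $\|f\|_\infty+\sup_k\|r_k\|$, so the family $(h_k)$ is equicontinuous. Together with the uniform bound from Step 2, Arzelà–Ascoli and a diagonal extraction on compact time intervals give $h_k\to h$ locally uniformly. The limit $h$ satisfies $\sup_t\|h(t)\|\le 2CM$ if the $\omega$-norm dominates the sup norm, or after undoing the exponential weight $\omega$ otherwise. Since $r_k\to\rho$, the functions $x_k$ converge locally uniformly to $y(t)=x_0+\rho t+h(t)$. Passing to the limit in the integral equation then shows that $y$ solves \eqref{eq:ODE} with $y(0)=x_0$. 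By uniqueness of solutions, $y=x$, and hence $\sup_t\|x(t)-\rho t\|\le \sup_t\|h(t)\|<\infty$.

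\textbf{Main obstacle.} I expect the hardest step to be the end of Step 3: turning a bound in the weighted norm $\|\cdot\|_{\omega,0}$ into a uniform sup bound on all of $\R$, and checking that the fixed-point equation really converges to the exact equation \eqref{eq:ODE}. The norm, the weight and the defect term are all defined in \cite{Oukil2023} and not reproduced here. I would first try to show that the weight is time-independent, or that the fixed-point identity is exact for each $k$ up to an error of order $\|r_k-\rho\|\,t$ that is absorbed in the definition of $\Psi$. If that fails, I would work instead in $\mathbb{T}^n$-periodic function spaces, where $h_k$ is a function of the phase $\theta=x_0+r_k t$ and sup bounds in time are automatic.
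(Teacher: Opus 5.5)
Your outline (fixed points $h_k$, a $k$-independent bound, compactness, then a limit) matches the paper's sketch, and Step 2 is fine given the structure you assume for $\Psi$. There is a genuine gap in Step 3, at the point you flag yourself, and it is not a technicality. Step 1 says $x_k(t)=x_0+r_kt+h_k(t)$ solves the equation only approximately. Step 3 uses $\dot h_k=f(x_k)-r_k$, i.e. that $x_k$ is an exact solution. Neither reading makes Step 3 work.

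\textbf{Exact reading.} Uniqueness gives $x_k=x$, so $h_k(t)=x(t)-x_0-r_kt$ for every $k$. Suppose $\|\cdot\|_{\omega,0}$ controlled $\sup_{t\in\R}$. Then your uniform bound would make $\|(r_k-r_j)t\|=\|h_j(t)-h_k(t)\|$ bounded in $t$, hence $r_k=r_j$. The sequence would be constant and $\rho$ rational, so only the trivial case is covered. Whenever $(r_k)$ is not constant (e.g. $\rho\notin\Q^n$), the norm therefore cannot control the sup. If it carries a weight such as $e^{-\omega|t|}$, the uniform bound yields only $\|x(t)-x_0-\rho t\|\le 2CM\,e^{\omega|t|}$. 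Undoing the weight gives exponential growth, not boundedness.

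\textbf{Approximate reading.} If $x_k$ is only approximate, with defect controlled by $\Phi_f(r_k)\le M$, that defect is bounded but never shown to tend to $0$. Your locally uniform limit $y$ then satisfies the equation only up to an error that need not vanish, so you cannot identify $y$ with $x$.

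What is missing is a transfer estimate from frequency $r_k$ to frequency $\rho$, uniform in $k$ and in $t\in\R$. Two forms would do:
\begin{itemize}
\item the defect of $x_k$ tends to $0$ locally uniformly while $\sup_{t\in\R}\|h_k(t)\|$ stays bounded;
\item $r\mapsto\Psi[r,\omega,h]$ is continuous at $\rho$, so you can pass to the limit in $h_k=\Psi[r_k,\omega,h_k]$ itself.
\end{itemize}
Neither follows from $\sup_k\Phi_f(r_k)\le M$ plus the contraction hypothesis. The bound forces $\langle\rho,p\rangle\neq0$ on $\Spec(f)\setminus\{0\}$, and each term $\|\hat f(p)\|/|\langle r_k,p\rangle|$ converges. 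Fatou even gives $\sum_{p\in\Spec(f)\setminus\{0\}}\|\hat f(p)\|/|\langle\rho,p\rangle|\le M$. But a merely bounded family of small-divisor series gives no continuity in $r$ unless the tails $\sum_{|p|>N}\|\hat f(p)\|/|\langle r_k,p\rangle|$ are small uniformly in $k$.

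The paper's sketch does not supply this step either. It asserts the exact representation $x(t)=r_kt+h_k(r_kt)$ with $\|h_k\|_{\omega,0}\le R$ for every $k$, and that assertion falls under the same dichotomy.
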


\begin{proof}[Sketch of proof]
Under the uniform contraction assumption, each operator $\Psi[r_k, \omega, \cdot]$ admits a unique fixed point $h_k$ with $\norm{h_k}_{\omega,0} \le R$ independent of $k$. The solution is written as $x(t) = r_k t + h_k(r_k t)$. By the uniform bound, a subsequence of $(h_k)$ converges weakly to a bounded function $h_\infty$, yielding $x(t) = \rho t + h_\infty(t)$. The limiting argument follows the structure of \cite{Oukil2023}, with the additional uniformity provided by the hypothesis.
\end{proof}

\begin{remark}
For finite trigonometric polynomials, the uniform contraction condition is automatically satisfied; this is the content of \cite{Oukil2023}. For general smooth fields, verifying this condition requires additional estimates that depend on the specific arithmetic structure of $\Spec(f)$.
\end{remark}

\section{Examples and discussion}

\subsection{Finite trigonometric polynomials}

If $\Spec(f)$ is finite, then $\mathfrak{C}(f) = \R^n$ and the uniform contraction condition holds. The main result of \cite{Oukil2023} is recovered.

\subsection{The counterexample with infinite spectrum \cite{Oukil2025}}

In the counterexample of \cite{Oukil2025}, the field $f$ is constructed so that there exists an incommensurable vector $\xi \in \R^n$ with the property that for every sequence $r_k \to \xi$, the series $\Phi_f(r_k)$ diverges. Hence $\xi \notin \mathfrak{C}(f)$ and $\mathfrak{C}(f) \subsetneq \R^n$. This shows that the arithmetic cone can be a proper subset of $\R^n$.

\subsection{Analytic fields}

If $f$ is analytic, its Fourier coefficients decay exponentially: $\norm{\hat f(p)} \le A e^{-a \abs{p}}$. In this case, the gauge $\Phi_f(r)$ converges for all vectors $r$ satisfying a Diophantine condition $\abs{\langle r, p \rangle} \ge \gamma \abs{p}^{-\tau}$. The set of such vectors has full Lebesgue measure, so $\mathfrak{C}(f)$ has full measure.

\section{The spectrally admissible class}

Following \cite{Oukil2023}, one can always assume, via a change of variable, that all components of the vector field are strictly positive. This motivates the following definition.

\subsection{Augmented field}

Let $f \in C^\infty(\T^n, \R^n)$. Choose $q > \|f\|_\infty$ and define the augmented field on $\T^{n+1}$ by
\[
\tilde{f}_q(z, z_{n+1}) = \begin{pmatrix} f(z + x_0 - q z_{n+1} \mathbf{1}) + q \\ 1 \end{pmatrix}, \qquad \mathbf{1} = (1,\dots,1).
\]
By construction, $\min_j \min_{\tilde{z}} (\tilde{f}_q)_j(\tilde{z}) \ge \min\{q - \|f\|_\infty, 1\} > 0$.

\begin{definition}[Spectral admissibility]
A vector field $f \in C^\infty(\T^n, \R^n)$ is called \emph{spectrally admissible} if $\mathfrak{C}(\tilde{f}_q) = \R^{n+1}$. The class of all such fields is denoted by $\mathcal{A}_{\text{\rm spec}}$.
\end{definition}

\begin{proposition}[Basic properties]
The class $\mathcal{A}_{\text{\rm spec}}$ satisfies:
\begin{enumerate}
\item Every finite trigonometric polynomial belongs to $\mathcal{A}_{\text{\rm spec}}$.
\item If $f \in \mathcal{A}_{\text{\rm spec}}$, then $f(\cdot + x_0) \in \mathcal{A}_{\text{\rm spec}}$ for any $x_0 \in \R^n$.
\item If $f \in \mathcal{A}_{\text{\rm spec}}$, then $\lambda f \in \mathcal{A}_{\text{\rm spec}}$ for any $\lambda > 0$.
\end{enumerate}
\end{proposition}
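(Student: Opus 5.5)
The plan is to compute the spectrum and Fourier coefficients of the augmented field $\tilde f_q$ explicitly, and then reduce each of the three items to a statement about the spectral gauge $\Phi_{\tilde f_q}$. I take $q$ to be an integer (or at least rational with $q\langle p,\mathbf 1\rangle\in\Z$), so that $\tilde f_q$ is $1$-periodic in $z_{n+1}$. Expanding $f$ by \eqref{eq:Fourier} gives
\[
\tilde f_q(z,z_{n+1})=\begin{pmatrix} q\\ 1\end{pmatrix}+\sum_{p\in\Spec(f)}\begin{pmatrix}\hat f(p)e^{2\pi i\langle p,x_0\rangle}\\ 0\end{pmatrix} e^{2\pi i(\langle p,z\rangle - q\langle p,\mathbf 1\rangle z_{n+1})}.
\]
Hence $\Spec(\tilde f_q)\setminus\{0\}\subset\{(p,-q\langle p,\mathbf 1\rangle): p\in\Spec(f)\setminus\{0\}\}$, with coefficient norms $\norm{\hat f(p)}$. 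The phase has modulus one and the last component vanishes. For $(r,s)\in\Q^{n+1}_{\tilde f_q}$ this gives
\[
\Phi_{\tilde f_q}(r,s)=\sum_{p\in\Spec(f)\setminus\{0\}}\frac{\norm{\hat f(p)}}{\abs{\langle r,p\rangle - qs\langle p,\mathbf 1\rangle}}. \tag{$\ast$}
\]

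\textbf{Item 1.} If $\Spec(f)$ is finite, then $(\ast)$ shows that $\Spec(\tilde f_q)$ is finite, so $\tilde f_q$ is itself a trigonometric polynomial on $\T^{n+1}$. I then invoke the Remark following the definition of $\mathfrak C$ (finite spectrum implies $\mathfrak C=\R^{n}$), applied in dimension $n+1$, to get $\mathfrak C(\tilde f_q)=\R^{n+1}$.

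This is the step I expect to be the real obstacle. If $\rho$ lies on one of the finitely many resonant hyperplanes $\langle \rho,P\rangle=0$ with $P\in\Spec(\tilde f_q)$, then any admissible $r_k\to\rho$ has $\abs{\langle r_k,P\rangle}\to0$, and the gauge blows up. So the Remark's argument covers only $\rho$ off these hyperplanes. I would first try to rely on the Remark exactly as stated in the paper. Failing that, I would restrict the claim to the complement of the resonant hyperplanes and flag the discrepancy.

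\textbf{Item 2.} Replacing $f$ by $f(\cdot+y)$ multiplies $\hat f(p)$ by $e^{2\pi i\langle p,y\rangle}$. This leaves both $\Spec(f)$ and $\norm{\hat f(p)}$ unchanged, and $\|f(\cdot+y)\|_\infty=\|f\|_\infty$, so the same $q$ is allowed. By $(\ast)$, the two augmented fields have the same spectrum and the same gauge, pointwise on the same set $\Q^{n+1}_{\tilde f_q}$. Their arithmetic cones therefore coincide, and both equal $\R^{n+1}$.

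\textbf{Item 3.} For $\lambda f$ the spectrum is unchanged and the coefficient norms are multiplied by $\lambda$. However, the admissible parameter must now satisfy $q'>\lambda\|f\|_\infty$, so $q$ may have to change. I choose $q'$ rational (indeed integer), and define the invertible rational linear map $L(r,s)=(r,sq'/q)$ on $\R^{n+1}$. Then $(\ast)$ gives
\[
\Phi_{\widetilde{\lambda f}_{q'}}(r,s)=\lambda\,\Phi_{\tilde f_q}(L(r,s)).
\]
Moreover $L$ maps $\Q^{n+1}$ onto itself and sends $\Q^{n+1}_{\widetilde{\lambda f}_{q'}}$ onto $\Q^{n+1}_{\tilde f_q}$, since admissibility is exactly nonvanishing of the denominators in $(\ast)$. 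Given $\rho\in\R^{n+1}$, take an admissible sequence $r_k\to L\rho$ with bounded $\Phi_{\tilde f_q}$. Then $L^{-1}r_k\to\rho$ and $\Phi_{\widetilde{\lambda f}_{q'}}(L^{-1}r_k)\le\lambda\sup_k\Phi_{\tilde f_q}(r_k)<\infty$. Hence $\mathfrak C(\widetilde{\lambda f}_{q'})=L^{-1}\mathfrak C(\tilde f_q)=\R^{n+1}$.

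Taking $\lambda=1$, the same computation shows that the definition of $\mathcal A_{\rm spec}$ does not depend on the admissible integer $q$. I would record this first, since it is implicitly needed for the class to be well defined.
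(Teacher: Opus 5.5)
Your suspicion about item 1 is correct, and it is fatal rather than a technical obstacle: item 1 is false. So ``relying on the Remark exactly as stated'' is not available. The paper's own proof of (1) is exactly that step ($\tilde f_q$ is a trigonometric polynomial, hence $\mathfrak{C}(\tilde f_q)=\R^{n+1}$), and it is invalid for the reason you found.

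In your formula $(\ast)$ the inclusion is an equality. Distinct $p$ give distinct frequencies $P=(p,-q\langle p,\mathbf 1\rangle)$, and the coefficient at $P$ has norm $\norm{\hat f(p)}\neq 0$. Now let $f$ be a nonconstant trigonometric polynomial and fix such a $P\neq 0$. Because $n+1\ge 2$, the hyperplane $P^\perp\subset\R^{n+1}$ contains some $\rho\neq 0$. For every sequence $(r_k)\subset\Q^{n+1}_{\tilde f_q}$ with $r_k\to\rho$,
\[
\Phi_{\tilde f_q}(r_k)\ \ge\ \frac{\norm{\hat f(p)}}{\abs{\langle r_k,P\rangle}}\ \longrightarrow\ \infty .
\]
Hence $\rho\notin\mathfrak{C}(\tilde f_q)$ and $f\notin\mathcal{A}_{\text{\rm spec}}$. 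The Remark after the definition of $\mathfrak{C}$ is false for the same reason, and so is the assertion in Section 2 that the denominators cannot vanish in the limit when $\rho\neq 0$.

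What is true for trigonometric polynomials $f$ is $\mathfrak{C}(\tilde f_q)=\{0\}\cup\bigl(\R^{n+1}\setminus\bigcup_{P}P^\perp\bigr)$, and your fallback argument proves it: use a uniform lower bound on $\abs{\langle r,P\rangle}$ near $\rho$, plus density of rationals in that open set. You should state outright that item 1 fails and prove this corrected version, rather than hedging.

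Your arguments for items 2 and 3 are correct. They rest on the same idea as the paper's one-line proofs: translation only changes phases, and scaling multiplies the gauge by $\lambda$. They are more careful than the paper on three points it ignores:
\begin{enumerate}
\item $q$ must be an integer for $\tilde f_q$ to live on $\T^{n+1}$;
\item $\tilde f_q$ depends on $x_0$;
\item $q$ must be enlarged when $\lambda\|f\|_\infty\ge q$.
\end{enumerate}
Your rational rescaling $L(r,s)=(r,sq'/q)$ handles the third point and also shows the class does not depend on the integer $q$.

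However, the argument of the previous paragraph works for every nonconstant smooth $f$, since any $p\in\Spec(f)\setminus\{0\}$ will do. So $\mathcal{A}_{\text{\rm spec}}$ consists exactly of the constant fields, for which $\Phi_{\tilde f_q}\equiv 0$ and the cone is all of $\R^{n+1}$. Items 2 and 3 are therefore true only trivially, and item 1 holds only for constant polynomials.
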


\begin{proof}
(1) If $f$ is a finite trigonometric polynomial, then $\tilde{f}_q$ is also a finite trigonometric polynomial, so $\mathfrak{C}(\tilde{f}_q) = \R^{n+1}$.

(2) Translation of $f$ corresponds to a translation in the augmented field, which does not affect the spectrum.

(3) Scaling $f$ by $\lambda$ scales $\tilde{f}_q$ in the first $n$ components; the spectral gauge scales accordingly and the cone is preserved.
\end{proof}

The counterexample of \cite{Oukil2025} does not belong to $\mathcal{A}_{\text{\rm spec}}$, as its augmented field possesses a direction $\xi$ with divergent spectral gauge.

\section{Conclusion}

We have shown that the finite-frequency assumption in Oukil's theorem is essential for the contraction proof, and that it cannot be replaced by mere $C^\infty$ regularity. The counterexample of \cite{Oukil2025} demonstrates that periodicity and smoothness are not sufficient to guarantee the existence of a strong rotation vector, because the required uniform summability condition (3) may fail.

The present analysis shows that finite Fourier support is not merely a technical assumption but reflects the summability mechanism underlying the contraction method. It therefore identifies finite Fourier support as a natural sufficient class for the existence of strong rotation vectors, while indicating that additional arithmetic assumptions are required in the infinite-spectrum setting.

The purpose of this note has been to isolate the arithmetic-spectral mechanism underlying the contraction method developed in previous work. The arithmetic cone $\mathfrak{C}(f)$ provides a natural framework for describing admissible rotation directions. Its definition captures the summability condition that governs the extension of the contraction argument from finite trigonometric polynomials to general smooth fields, and it organises the admissible asymptotic directions according to the convergence properties of the spectral gauge.

The main observations are that $\mathfrak{C}(f)$ is a cone, that under a uniform contraction condition every element of $\mathfrak{C}(f)$ is a strong rotation vector, and that the counterexample of \cite{Oukil2025} corresponds to the case where the cone is a proper subset of $\R^n$. We have also introduced the class $\mathcal{A}_{\text{\rm spec}}$ of spectrally admissible fields, for which the augmented cone equals the whole space, and we have shown that it contains all finite trigonometric polynomials.

The geometric and measure-theoretic properties of $\mathfrak{C}(f)$, the question of its additive closure, and its precise description for analytic or Gevrey fields are natural directions for further investigation. The present note suggests several questions concerning the structure of $\mathfrak{C}(f)$, its stability properties, and possible characterisations for broader classes of periodic vector fields. 
\bibliographystyle{amsplain}

\end{document}